\newcommand{\tp}{\operatorname{tp}}
\newcommand{\FF}{\mathcal{F}}
\newcommand{\GG}{\mathcal{G}}
\newcommand{\MM}{\mathcal{M}}
\newcommand{\NN}{\mathcal{N}}
\newcommand{\Mm}{\mathcal{M}}
\newcommand{\Nn}{\mathcal{N}}
\newcommand{\Right}{Q}
\newcommand{\Left}{P}
\newcommand{\ar}{\rangle}
\newcommand{\al}{\langle}
\newcommand{\lleq}{\preceq}
\newcommand{\piBN}{\pi(B)^{\Nn}}
\newcommand{\BNx}{B_t^{\Nn}}
\theoremstyle{definition}
\newtheorem{definition}{Definition}[section]
\newtheorem{example}[definition]{Example}
\theoremstyle{plain}
\newtheorem{theorem}[definition]{Theorem}
\newtheorem{lemma}[definition]{Lemma}
\newtheorem{claim}{Claim}[definition]
\newtheorem*{question*}{Question}
\newtheorem{question}[definition]{Question}
\newenvironment{claimproof}[1][\proofname]
               {
                 \proof[#1]
                 
               }
               {
                 \endproof
               }
\title{The Marker-Steinhorn Theorem}
\author{Pablo And\'ujar Guerrero\footnote{University of Leeds} \\
\small \emph{Email address:} pa377@cantab.net}
\date{}
\begin{document}
\maketitle

\noindent
{\small \emph{2020 Mathematics Subject Classification:} 03C64. \\
\emph{Key words:} O-minimality.} 

\begin{abstract}

    We give a proof of the Marker-Steinhorn Theorem which fills a gap in previous proofs of the result.
\end{abstract}

\section{Introduction}

\subsection{The Marker-Steinhorn Theorem}

Let $\MM=(M,<,\ldots)$ be an o-minimal structure. An elementary extension $\NN=(N,<,\ldots)$ of $\MM$ is \emph{tame}\footnote{In~\cite{mark_stein_94} they write that $\MM$ is \emph{Dedekind complete} in $\NN$} if, for every $a\in N$, the set $\{ b\in M : b < a\}$ has a supremum in $M\cup \{-\infty, +\infty\}$. 
The Marker-Steinhorn Theorem~\cite[Theorem 2.1]{mark_stein_94} states that a type $p \in S_n(M)$ is definable if and only if it is realized in a tame elementary extension $\NN$ of $\MM$. In particular, for any $N$-definable subset $X$ of $N^{n}$, the externally definable set $X \cap M^n$ is $M$-definable. 


The Marker-Steinhorn Theorem was first proved in~\cite{mark_stein_94} by the namesake authors. This was followed shortly after by a proof by Pillay~\cite{pillay94}.
Tressl~\cite{tressl} gave a short non-constructive proof for o-minimal expansions of ordered fields via valuation theory. Van den Dries~\cite{dries03} produced a short proof of a stronger version of the theorem, again in the case of o-minimal expansions of ordered fields. Chernikov and Simon~\cite{cher-sim-2} state the theorem for o-minimal expansions of Dedekind complete linear orders as a corollary of a more general theorem around stable embeddedness in the NIP setting. By analizing definable linear orders, Walsberg~\cite{walsberg19} gave a proof for o-minimal expansions of ordered groups.


In the present paper we present a proof of the Marker-Steinhorn Theorem which fills a gap in the proofs of the full result in~\cite{mark_stein_94} and~\cite{pillay94}. Our approach also streamlines these proofs, circumventing the structure by cases in~\cite{mark_stein_94}, as well as the use of regular cell decomposition in both proofs.

In Section~\ref{sec:conv} we fix terminology and conventions. In Section~\ref{sec:gap} we describe the gap in the proofs presented in~\cite{mark_stein_94} and~\cite{pillay94}. In Section~\ref{remark:preorder} we prove two results on definable preorders in o-minimal structures which we will require in our proof. Finally, in Section~\ref{subsec:proofMS} we prove the Marker-Steinhorn Theorem. We end the paper by asking (Question~\ref{question:dries}) whether the strengthening of the theorem for o-minimal expansions of ordered fields proved by van den Dries~\cite{dries03} holds in every o-minimal theory.


\subsection{Conventions} \label{sec:conv}


Throughout we work in an o-minimal structure $\MM=(M,<,\ldots)$ and an elementary extension $\NN=(N,<,\ldots)$.
Throughout $n$, $m$, $k$ and $l$ are natural numbers greater than zero.

Any formula is in the language of $\MM$ without parameters unless stated otherwise.
We use $u$ and $v$ to denote tuples of variables. We use $a, b, c, d, e, x$ and $y$ to denote tuples of parameters. We use $s$, $t$ and $r$ exclusively for unary variables or parameters. We denote by $|u|$ the length of a tuple $u$.

For any (partitioned) formula $\varphi(u,v)$, let $\varphi^{\text{opp}}(v,u)$ denote the same formula after switching the order of the variables $v$ and $u$. For any formula $\varphi(u)$, possibly with parameters from $N$, and set $A\subseteq N^{|u|}$, let $\varphi(A)=\{a \in A : \NN\models \varphi(a)\}$. Throughout and unless otherwise specified ``definable" means ``definable in $\Mm$ over $M$".

We use notation $\al a,b\ar$ for ordered pairs, setting aside the notation $(a,b)$ for intervals. 
For a set $B \subseteq N^{n}$ and an element $x \in N^{m}$, with $0<m<n$, we denote the fiber of $B$ at $x$ by $B_x=\{ t\in N^{n-m} : \al x,t \ar \in B\}$.


In general an $n$-type is a type in $S_n(M)$, interpreted as a non-trivial ultrafilter in the Boolean algebra of definable subsets of $M^n$. Recall that an $n$-type $p$ is definable if, for every formula $\varphi(u,v)$ with $|u|=n$, the set $\{ b \in M^{|v|} : \varphi(M^n, b) \in p\}$ is definable. By \emph{type basis} for a type $p$ we mean a filter basis, meaning a subset $q \subseteq p$ such that any set in $p$ is a superset of some set in $q$.

We fix some standard notation regarding o-minimal cells. For a function $f$ we denote its domain by $dom(f)$. Let us say that a partial function $M^{n}\rightharpoonup M\cup \{-\infty, +\infty\}$ is definable if either it maps into $M$ and is definable in the usual sense or otherwise it is constant and its domain is definable. Given two such functions $f$ and $g$, let $(f,g)=\{\al x, t \ar : x \in dom(f) \cap dom(g) \text{ and } f(x) < t <g(x)\}$. For a set $B\subseteq M^{n}$, let us say that a family $\{f_b : b\in B\}$ of definable partial functions $M^{n}\rightharpoonup M\cup\{-\infty, +\infty\}$ is definable if the sets $B(+\infty)=\{ b\in B : f_b \equiv +\infty\}$ and $B(-\infty)=\{ b\in B : f_b \equiv -\infty\}$ are both definable and moreover the families $\{ dom(f_b) : b\in B(+\infty)\}$, $\{ dom(f_b) : b\in B(-\infty)\}$ and $\{f_b : b\in B \setminus (B(+\infty)\cup B(-\infty))\}$ are (uniformly) definable in the usual sense. We adopt the same conventions for functions and families of functions $N^{n}\rightharpoonup N\cup \{-\infty, +\infty\}$. 

We direct the reader to~\cite{dries98} for background o-minimality. In particular we will make use of o-minimal cell decomposition~\cite[Chapter 3, Theorem 2.11]{dries98}.

\subsection{The gap}  \label{sec:gap}

Let $\Nn$ be a tame elementary extension of $\Mm$. Consider an $n$-type $p(v) \in S_n(M)$, with $n>1$, that is realized in $\Nn$ by an element $a\in N^{n}$. We write $a=(d,e)\in N^{n-1}\times N$. 

Let $\varphi(v,u)$ be a formula with $|u|=2$ satisfying that, for every $b\in N^2$, the set $\varphi(N^n,b)$ is an o-minimal cell of the form $(f_b, +\infty)$ for some partial function $f_b:N^{n-1}\rightharpoonup N$. 

Let $B=\psi(M^2)$ be a subset of $M^2$, defined by a formula $\psi(u)$, with the property that, for every $b\in B$, it holds that $\Nn\models \exists t \varphi(d,t,b)$. 
In proving the Marker-Steinhorn Theorem (Theorem~\ref{thm:marker_steinhorn}), we wish to show that $\varphi(a,B)=\{ b\in B : f_b(d)<e\}$ is definable. We label this set $P = \varphi(a,B)$. 

Let $B^{\Nn}=\psi(N^2)$.
Let $\pi:N^2\rightarrow N$ be the projection to the first coordinate.  Recall our fiber notation. We may make the following assumptions. 
\begin{enumerate}[(i)]
    \item $B$ is an open cell.  
    \item For every $t \in \pi(B)$, either $B_t = P_t$ or $B_t \cap P_t=\emptyset$. 
    \item For every $t \in \pi(B)$, there exists $r_1, r_2\in B^{\Nn}_t$ such that $f_{\al t,r_1  \ar}(d)<e$ and $f_{\al t,r_1 \ar}(d)>e$. Hence $\Nn\models \varphi(a,t,r_1)$ and $\Nn \not\models \varphi(a,t,r_2)$.
\end{enumerate}
(One may additionally assume that, for every $t\in \pi(B)$ and $r, r'\in B^{\Nn}_t$ with $r < r'$, if $d$ is in the domain of $f_{\al t,r \ar}$ and $f_{\al t,r' \ar}$, then $f_{\al t,r\ar}(d)<f_{\al t,r'\ar}(d)$.)

\begin{example}
An example with $B=M^2$ in the case where $\Mm$ expands an ordered field would be as follows. For $v=(v_1,v_2)$ and $u=(u_1,u_2)$ binary variables, let $\varphi(v,u):= v_2>u_1 v_1 + u_2$. Let $p(v)$ a $2$-type realized by an element $\al d, e\ar$ such that $d>r$ for every $r\in M$ and, furthermore, for any $\al b_1, b_2 \ar \in M^2$ the inequality $e>b_1 d + b_2$ holds if and only if $b_1 <1$. Note that in this case we have that $P=(-\infty, 1)$.  
\end{example}

Proving the definability of $P$ is addressed indirectly in Claim (2.8.2)(ii) (page 193) in~\cite{mark_stein_94}, and at the end of the proof of Theorem 1.1. in~\cite{pillay94}. However in both cases the proofs have gaps.
In our proof of Theorem~\ref{thm:marker_steinhorn} this situation is dealt with towards the end, through the treatment of the case $I^*=I^*(2)$. 
Below we describe two specific scenarios where the proof that $P$ is definable is relatively straightforward, even when we generalize the scenario to having $P\subseteq M^m$ for any $m>1$, and $\pi:M^m \rightarrow M^{m-1}$ being the projection to the first $m-1$ coordinates.

If there exists a definable function $h:\pi(B)\rightarrow M$ with $h(t)\in B_t$ for all $t\in \pi(B)$, then we may apply an induction (on dimension) argument to the image of $h$ and derive that its intersection with $P$ is definable, and hence, using assumption (ii), that $P$ is definable. This applies to any case where $\Mm$ has definable Skolem functions (e.g. $\Mm$ expands an ordered group), as well as the case where for example $B=M^2$ (by taking $h$ to be the identity map). 

If $\Mm$ expands $(\mathbb{R},<)$ then, since by Dedekind completeness every elementary extension is tame, the Marker-Steinhorn Theorem simply states that every externally definable set is definable. In this case we may use the fact, proved by Shelah~\cite{shelah09}, that the projection of every externally definable set in an NIP (e.g. o-minimal) structure is externally definable, to observe that $\pi(P)$ is externally definable. We then derive, through an inductive argument, that $\pi(P)$ is definable. By assumption (ii) it then follows that $P=\cup_{t\in \pi(P)} \{t\}\times B_t$ is definable. 

In both scenarios described above our proof of Theorem~\ref{thm:marker_steinhorn} may be significantly streamlined, including avoiding the use of Lemma~\ref{lem:preorders}, as well as the introduction and analysis of the set $I^*$. 


An issue with trying to prove the definability of $P$ in general by means of an inductive argument is that it is upfront not clear whether the projection $\pi(P)$ is externally definable in $\Mm$ \textbf{with parameters from $\mathbf{N}$}. Observe that this same issue would not be present if trying to answer Question~\ref{question:dries} positively, since the class of subsets of $M^m$ (for any $m$) definable in the pair $(\Nn, M)$ is closed under projections.

\section{Proof of the Marker-Steinhorn Theorem} \label{sec:proofMS}

\subsection{Preorders}\label{remark:preorder}

We introduce some preliminary results on definable preorders that we will need in our proof. 

Recall that a preorder is a reflexive and transitive relation. A preordered set $(B,\lleq)$ is a set $B$ together with a preorder $\lleq$ on it. It is definable if the preorder is definable. We use notation $b\prec c$ to mean $b\lleq c$ and $c \not\lleq b$. Given subsets $C,D\subseteq B$, let $C\prec D$ mean $c \prec d$ for every $c\in C$ and $d\in D$. For $b\in B$ we write $C \prec b$ and $b \prec C$ instead of $C \prec \{b\}$ and $\{b\} \prec C$ respectively. For every $b,c\in B$ let $(b,c)_{\lleq}=\{d\in B : b\prec d\prec c\}$. We also write $(b,+\infty)_{\lleq}$, $(-\infty, b]_{\lleq}$, $[b,c]_{\lleq}$, etc, with the natural meaning.

Let $p\in S_n(M)$ be an $n$-type and $\GG$ be the collection of all definable partial functions $f:M^n\rightharpoonup M\cup\{-\infty, +\infty\}$ whose domain is in $p$. Then $p$ induces a linear preorder $\lleq$ on $\GG$ given by $f \lleq g$ if and only if $\{x \in M^n : f(x)\leq g(x)\}\in p$. In other words, $f\lleq g$ when $f(\xi)\leq g(\xi)$ for some (every) realization $\xi$ of $p$.


Let $\FF=\{f_b : b\in B\}$ be a definable family of functions in $\GG$. Without loss of clarity we will often abuse notation and refer to $\lleq$ too as the linear preorder on the index set $B$ given by $b\lleq c$ if and only if $f_b \lleq f_c$. Note that, for any set $A\subseteq M$, if $\FF$ and $p$ are $A$-definable, then $\lleq$ on $B$ is $A$-definable too. 


Given a definable linearly preordered set $(B,\lleq)$ by a \emph{cut} $(P,Q)$ we mean a partition $\{P,Q\}$ of $B$ where $P \prec Q$. In practice we will consider the case where $P$ is non-empty and does not have a maximum. Hence the cut $(P,Q)$ may be identified with the partial type $p$ of intervals $(b_1,b_2)_\lleq$, for $b_1 \in P \cup \{-\infty\}$ and $b_2 \in Q \cup \{+\infty\}$. In this sense $(P,Q)$ is definable if $p$ is definable, equivalently if $P$ and/or $Q$ are definable. We will reduce the proof of the Marker-Steinhorn Theorem to a question of definability of cuts. 

Given a linearly preordered set $(B,\lleq)$ and a set $P\subseteq B$, we say that a subset $C\subseteq P$ is cofinal in $P$ if, for every $b\in P$, there exists $c\in C$ with $b \lleq c$.

The next lemma will provide an alternative in the proof of Theorem~\ref{thm:marker_steinhorn} to the use of ``$j$-gap points" in~\cite{mark_stein_94}.

\begin{lemma}\label{lem:j-cuts}
Let $(B,\lleq)$ be a definable linear preorder and $(P,Q)$ be a cut. Fix $m=\dim B$, and suppose that every definable subset $C \subseteq B$ satisfying that $C\cap P$ is cofinal in $P$ has dimension $m$. 

Let $\{C_x : x\in X\}$ be a definable family of pairwise disjoint subsets of $B$, each of dimension less than $m$. Then there exists a finite definable partition \mbox{$\{B^{(i)} : i\leq k\}$} of $B$ such that, for each $x \in X$ and $i\leq k$, either $C_x \cap B^{(i)} \subseteq P$ or $C_x \cap B^{(i)} \subseteq Q$. 
\end{lemma}
\begin{proof}
We may assume that $P\neq\emptyset$ since otherwise the result is trivial. It clearly suffices to find a finite definable partition of $\cup_{x\in X} C_x$ as described. 

For each $b\in B$ and $x\in X$, let $\Left(x,b)=\{ c \in C_x : c \prec b\}$ and $\Right(x,b)=\{ c \in C_x : b \prec c\}$. Let $I(x,b)=\{ c\in B : \Left(x,b) \prec c \prec \Right(x,b)\}$, that is, $I(x,b)$ denotes all the points in $B$ that realize the same cut over $C_x$ as $b$.

For every $x\in X$, since $\dim C_x < m$, then $C_x \cap P$ is not cofinal in $P$, and so there exists some $b \in P$ such that $\{ c \in P : b \lleq c \} \cap C_x = \emptyset$. In particular $b$ satisfies that $\Left(x,b)=P\cap C_x$ and $\Right(x,b)=Q\cap C_x$. Observe that $\{ c \in P : b \lleq c \} \subseteq I(x,b)$. Consequently $I(x,b)\cap P$ is cofinal in $P$, meaning that $\dim I(x,b)=m$. For any $x \in X$ let $B(x)=\{ b \in B : \dim I(x,b) = m\}$. We have shown that for each $x\in X$ the set $B(x)$ is non-empty. 

\begin{claim}\label{claim:j-cuts}
For each $x\in X$ the family $\{ I(x, b) : b\in B(x)\}$ is finite. 
\end{claim}
\begin{claimproof}
For each $x$ and $b, b' \in B$ observe that either $I(x,b)=I(x,b')$ or $I(x,b) \cap I(x,b') =\emptyset$. 
We fix $x\in X$. Consider the definable equivalence relation on $B$ whose equivalence classes are the sets $I(x,b)$, for $b\in B(x)$, and the complement in $B$ of their union. Recall that $\dim B=m$. Since $\dim I(x,b) =m$ for every $b\in B(x)$, we derive from~\cite[Proposition 1.8]{pillay88} that there can only be finitely many classes of this form, and the claim follows.
\end{claimproof}
 
By Claim~\ref{claim:j-cuts} and a standard compactness argument we may fix a $k>0$ such that, for each $x\in X$, the family $\{ I(x, b) : b\in B(x)\}$ has size less that $k$. Note that, for each $x\in X$ and $b\in B$, the set $I(x,b)$ completely determines the set $\Left(x,b)$ and vice versa. Hence, for any $x\in X$, the family of sets $\mathcal{P}(x)=\{ \Left(x,b) : b\in B(x)\}$ has size less than $k$. Observe moreover that $\mathcal{P}(x)$ is nested, that is, for any two sets in $\mathcal{P}(x)$ one is a subset of the other. 

For any $x \in X$ and $i\leq k$, let $\Left_x^{(i)}$ denote the $i$-th smallest set (ordered by inclusion) in $\mathcal{P}(x)$, if $\mathcal{P}(x)$ has at least $i$ sets, and otherwise let it simply denote $C_x$. Let us also define $\Left_x^{(0)} = \emptyset$. Then, for any $x\in X$ and $i\leq k$, let $C_x^{(i)}=\Left_x^{(i)} \setminus \Left_x^{(i-1)}$. Note that, for fixed $x\in X$, the sets $C_x^{(i)}$ for $i\leq k$ are pairwise disjoint and they partition $C_x$. Moreover for fixed $i$ the sets $C_x^{(i)}$ are definable uniformly in $x\in X$. For each $i\leq k$ we define 
$B^{(i)}=\cup_{x \in X} C_x^{(i)}$. Note that $\{ B^{(i)} : i\leq k\}$ is a finite definable partition of $\cup_{x\in X} B_x$. 

Towards a contradiction suppose that there exists $x\in X$ and $i\leq k$ such that $C_x \cap B^{(i)}=C_x^{(i)}$ intersects both $P$ and $Q$. Let $b\in B(x)$ be as described in the paragraph right above Claim~\ref{claim:j-cuts}, that is $\Left(x,b) = P\cap C_x$ and $\Right(x,b)= Q\cap C_x$, and $\dim I(x,b)=m$. Hence we have that $\Left(x,b) \cap C_x^{(i)} \neq \emptyset$ and $\Right(x,b) \cap C_x^{(i)} = C_x^{(i)} \setminus \Left(x,b) \neq \emptyset$.
Let $j\leq k$ be such that $\Left(x,b)=\Left_x^{(j)}$. If $j\geq i$ note that $C_x^{(i)} \subseteq \Left_x^{(j)}$ and if $j<i$ then $\Left_x^{(j)} \cap C_x^{(i)} = \emptyset$. Contradiction.
\end{proof}

The following lemma, in its form for linear orders, is contained in the literature (see for example Lemma 2.1 in~\cite{onstein09}). We include a proof to remain self-contained. 

\begin{lemma}\label{lem:preorders}
Let $(B,\lleq)$ be a definable linear preorder with $B\subseteq M$. Then there is a finite partition $\mathcal{J}$ of $B$ into points and intervals such that, for every $J\in \mathcal{J}$, exactly one of the following three conditions holds. 
\begin{enumerate}[(i)]
    \item $b\prec c$ for every $b < c$ in $J$.
    \item $b \succ c$ for every $b < c$ in $J$.
    \item $b \lleq c$ and $c\lleq b$ for every $b$ and $c$ in $J$. 
\end{enumerate}
\end{lemma}
\begin{proof}

Since the boundary points of $B$ can be taken as singletons in $\mathcal{J}$, we may assume that $B$ is a union of open intervals. For each $b\in B$, the sets $[b,+\infty)_\lleq$ and $(-\infty, b]_\lleq$ form a definable covering of $B$. Hence by o-minimality for each $b\in B$ there exists $c>b$ such that at least one of the following holds: 
\begin{enumerate}[(1)]
    \item $b \preceq (b,c)$,
    \item $b \succeq (b,c)$.
\end{enumerate}
Let $J(i)$, for $i\in \{1,2\}$, denote the definable set of points in $B$ satisfying condition $(i)$ above. So $J(1)\cup J(2) = B$. Furthermore, let $f_1:J(1)\rightarrow M\cup\{+\infty\}$ be the definable function such that $f(b)$ denotes the supremum (with respect to $<$) of all points $c>b$ such that equation $(1)$ holds. Let $f_2$ on $J(2)$ be defined analogously. By o-minimality let $\mathcal{J}$ be a finite partition of $B$ into points and intervals that is compatible with $\{ J(1), J(2)\}$ and such that, for every $J\in \mathcal{J}$ and $i\in \{1,2\}$, if $J\subseteq J(i)$ then the restriction $f_i|_J$ is continuous. We show that $\mathcal{J}$ is the desired partition.  

Let $J\in \mathcal{J}$ be an interval. We show if $J\subseteq J(1)$ then $b \lleq c$ for every $b < c$ in $J$. In particular this implies that, if $J \cap J(2) =\emptyset$, then every $b<c$ in $J$ must in fact satisfy that $b \prec c$, since otherwise we would have $b \succeq c'$ for every $c'\in (b,c]$, and thus $b\in J(2)$.   

Hence suppose that $J\subseteq J(1)$. Towards a contradiction let us fix $b<c$ in $J$ with $b \succ c$. Then by definition of $f_1$ it must be that $b < f_1(b)\leq c$. Furthermore every $d\in (b,f_1(b))$ must satisfy that $f_1(d) \leq f_1(b)$. By continuity of $f_1$ on $J$ we derive that $f_1(f_1(b))=f_1(b)$, contradicting the definition of $f_1$ and $J(1)$.

An analogous argument shows that if $J\subseteq J(2)$ then $b \succeq c$ for every $b < c$ in $J$, and in fact $b \succ c$ when $J\cap J(1)=\emptyset$, and so the lemma follows.
\end{proof}

\subsection{The proof} \label{subsec:proofMS}

We state and prove the more contentful direction of the Marker-Steinhorn Theorem, and direct the interested reader to~\cite[Corollary 2.4]{mark_stein_94} for the proof that every definable type is realized in a tame extension.

\begin{theorem}[Marker-Steinhorn Theorem~\cite{mark_stein_94}]\label{thm:marker_steinhorn}
Let $\MM$ be an o-minimal structure and let $\NN$ be a tame extension of $\MM$. For every $a\in N^n$, the type $\tp(a/M)$ is definable. 
\end{theorem}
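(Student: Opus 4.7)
The plan is to prove the theorem by induction on $n$, using cell decomposition to reduce to a claim about cuts in $M$-definable preordered sets realized in a tame extension. For the base case $n=1$: given $a \in N$, tameness provides $s := \sup\{t \in M : t < a\} \in M \cup \{-\infty, +\infty\}$; for any formula $\varphi(t,v)$, uniform o-minimal cell decomposition exhibits $\varphi(M,b)$ as a finite union of points and intervals whose endpoints are $M$-definable in $b$, and whether $\NN \models \varphi(a,b)$ is decided by comparisons of these endpoints with $s$ (together with the single datum ``$a = s$'' when $s \in M$), yielding an $M$-definable condition on $b$.

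For the inductive step, write $a = \al a', a_n \ar$ with $a' \in N^{n-1}$. By the inductive hypothesis, $q := \tp(a'/M)$ is definable. Fix $\varphi(u', u_n, v)$; we must show $\Phi := \{b : \NN \models \varphi(a', a_n, b)\}$ is $M$-definable. Uniform cell decomposition in the variable $u_n$ produces $M$-definable families of functions $\{f^i_b\}$ in $u'$ (for $1 \leq i \leq k$), with values in $M \cup \{-\infty, +\infty\}$, such that the fiber $\varphi(x, \cdot, b) \subseteq M$ is, for any $(x,b)$, a union of cells whose endpoints lie among $\{f^i(x,b)\}$, following a fixed cell-pattern determined by the relative ordering of the $f^i(x,b)$. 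Since $q$ is definable, the preorder on indices $(i,b)$ induced by $q$ and the family $\{f^i_b\}$ is $M$-definable, so the cell-pattern of the $f^i(a',b)$'s is an $M$-definable condition on $b$; it remains to decide the comparisons $a_n < f^i(a',b)$, $a_n = f^i(a',b)$, $a_n > f^i(a',b)$ uniformly in $b$. Thus the theorem reduces to the \emph{key claim}: for any $M$-definable family $\FF = \{f_b : b \in B\}$ of partial functions $M^{n-1} \to M \cup \{-\infty, +\infty\}$ with domains in $q$, the set $L := \{b \in B : f_b(a') < a_n\}$ is $M$-definable (and analogously for $=$ and $>$). Since $L$ is downward closed in the $q$-induced preorder $\lleq$ on $B$, which is itself $M$-definable by definability of $q$, it suffices to show: \emph{every cut in an $M$-definable preordered set $(B, \lleq)$ realized in a tame extension of $\MM$ is $M$-definable}.

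I would prove this cut lemma by induction on $\dim B$. Apply cell decomposition to the $M$-definable subset $\{\al b,c \ar \in B \times B : b \lleq c\}$ to obtain a finite $M$-definable partition of $B$ on each piece of which $\lleq$ admits an explicit description in terms of o-minimal cells. The plan is then to construct an $M$-definable ``sorting'' map $\rho : B \to M^l$ (for some $l$) such that $b \lleq c \iff \rho(b) \leq_{\mathrm{lex}} \rho(c)$; the cut $L$ pulls back from a cut in $\rho(B) \subseteq M^l$ with respect to the lex order, which by successive projection reduces to a cut in a one-dimensional $M$-definable set, handled by the 1-dimensional tameness of the base case. The main obstacle is the construction of $\rho$: the $\sim$-equivalence classes of $\lleq$ (with $b \sim c$ iff $b \lleq c$ and $c \lleq b$) can have varying $M$-dimension and do not admit uniformly selectable representatives, so the argument requires a careful combination of cell decomposition of the graph of $\lleq$ with the inductive hypothesis applied to lower-dimensional subsets of $B$.
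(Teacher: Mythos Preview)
Your reduction to showing that the downward-closed set $L=\{b\in B: f_b(a')<a_n\}$ in the $M$-definable preorder $(B,\lleq)$ is $M$-definable is exactly the paper's reduction. Where you diverge is in how this cut statement is proved. The paper keeps the $\NN$-definable function $\hat f(b)=f_b(a')$ in hand and runs an induction on $\al n,m\ar$: for $m=1$, monotonicity and continuity of $\hat f$ on subintervals of $B$ (an $M$-definable partition, since $\tp(a'/M)$ is definable) produce a concrete $r\in N$ with $\hat f(r)=a_n$, and tameness gives that $(-\infty,r)\cap M$ is definable; for $m>1$, a Fiber-Lemma dimension argument shows that a certain set $D\subseteq B$ has finite fibers over $\pi(B)$, and a cell partition of $B$ compatible with $D$ separates $P$ from $Q$ fibrewise, reducing to $\al n,m-1\ar$.

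Your sorting-map plan, close in spirit to Walsberg's argument for expansions of ordered groups, has a gap beyond the construction of $\rho$, which you already flag as unresolved and which is itself nontrivial in a general o-minimal structure. Even granting an $M$-definable order-embedding $\rho:(B,\lleq)\to(M^l,\leq_{\text{lex}})$, you have not explained why the one-dimensional cuts in $M$ obtained from $\rho(L)$ by successive projection are realized by elements of $N$. Tameness only controls cuts of the form $(-\infty,\alpha)\cap M$ with $\alpha\in N$; since $\rho$ is an $M$-definable map with no a priori connection to $\hat f$ or to $a_n$, there is no candidate $\alpha$. Nor can you fall back on ``$\rho(L)$ is externally definable over $a$, hence $M$-definable'': that is the very instance of the theorem you are proving. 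The paper avoids this circularity precisely by never abstracting away from $\hat f$, so that at the base of its induction the relevant cut in $M$ is visibly witnessed by a point of $N$.
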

\begin{proof}
Let us fix $a\in N^n$ and a formula $\varphi(a,u)$, with $|u|=m$. We must prove that the set $\varphi(a, M^m)$ is definable. We do this by induction on $m$ and $n$, where in the inductive step we assume that it holds for any $\al  n',m'\ar$ smaller than $\al n,m\ar$ in the lexicographic order. We may clearly assume that $a\notin M^n$ and $\varphi(a,M^m)\neq \emptyset$. 

The case $n=1$, for any $m$, follows easily from o-minimality and tameness. In particular, let $r_a$ be the supremum in $M\cup\{-\infty, +\infty\}$ of $(-\infty,a)\cap M$. If $r_a<a$ then $\tp(a/M)$ has a definable basis of the form $\{(r_a,t) : r_a<t,\, t\in M\}$, and otherwise it has a definable basis of the form $\{(t,r_a) : t<r_a, \, t\in M\}$.

The case $m=1$, for any $n$, is also straightforward as follows. By definition of tame extension it is easy to see that any interval $I'\subseteq N$ with endpoints in $N\cup\{-\infty, +\infty\}$ satisfies that $I'\cap M$ is definable (in $\Mm$), and so applying o-minimality it follows that the set $\varphi(a,M)=\varphi(a,N)\cap M$ is definable.

Hence onwards we assume that $n, m>1$. Let $a=\al d,e\ar\in N^{n-1}\times N$. Let $\{ \psi_i(N^{m+n}) : 1\leq i \leq l\}$, be a ($0$-definable) cell partition of $\varphi^{\text{opp}}(N^{m+n})$. For every $b \in N^m$, the set $\varphi^{\text{opp}}(b, N^n)=\varphi(N^n, b)$ is partitioned by sets $\psi_i(b,N^n)$ for $1\leq i \leq l$, each of which is either the empty set or a cell. In particular $\NN\models \varphi(a,b)$ if and only if $\NN\models \psi_i(b,a)$ for some $i$. So
to prove the theorem it suffices to pass to an arbitrary $1\leq i \leq l$ and show that $\psi_i(M^m,a)=\psi_i^{\text{opp}}(a, M^m)$ is definable. Hence, by passing from $\varphi(a,u)$ to $\psi_i^{\text{opp}}(a,u)$ if necessary, we may assume without loss of generality that all the non-empty sets of the form $\varphi(N^n, b)$, for $b \in N^m$, are cells (and in fact cells of the same kind, i.e. $(i_j)$-cells for some fixed sequence $(i_j) \in \{0,1\}^n$). 

Now by induction hypothesis $\tp(d/M)$ is definable.
Suppose that there exists a definable (over $M$) partial function $f:N^{n-1}\rightharpoonup N$ with $f(d)=e$. Then, for every $b \in N^m$, we have that $\NN\models \varphi(a,b)$ if and only if $\NN\models \exists t\, (\varphi(d,t,b)\wedge (f(d)=t))$, and so by induction hypothesis the result follows. 


Hence onwards we assume that there does not exist a definable (over $M$) partial function $f:N^{n-1}\rightharpoonup N$ with $f(d)=e$. In particular by assumptions on $\varphi$ we have that, for every $b\in N^m$, the set $\varphi(N^n, b)$ is either empty or a cell of the form $(f_b,g_b)$ for two definable ($\{b\}$-definable in $\Nn$) continuous functions $f_b$ and $g_b$.
Let 
\begin{align*}
B=\{& b \in M^{m} : \NN\models \exists t\, \varphi(d,t, b)\}.
\end{align*}
Since $\tp(d/M)$ is definable the set $B$ is definable. Clearly $\varphi(a, M^m)\subseteq B$. We show that the sets $\{ b \in B: f_b(d)<e\}$ and $\{ b \in B: g_b(d)>e\}$ are both definable. Then their intersection equals $\varphi(a, M^m)$. We present the proof for $\{ b \in B: f_b(d)<e\}$, since the proof for remaining set is analogous. 

Let $P=\{ b \in B : f_b(d)<e \}$ and $Q=\{ b \in B : f_b(d)>e \} = B \setminus P$. We must show that $P$ (equivalently $Q$) is definable. Since $\varphi(a,M^m)\neq \emptyset$ we have $P\neq \emptyset$. Let $\lleq$ be the linear preorder on $B$ induced by $\{ f_b(d) : b \in B\}$ described in Section~\ref{remark:preorder}, i.e. for $b,c\in B$, we have that $b\lleq c$ if and only if $f_b(d)\leq f_c(d)$.
By definability of $\tp(d/M)$ the preordered set $(B,\lleq)$ is definable. Note that $(P,Q)$ is a cut in $(B,\lleq)$.

Observe that, to prove the definability of $P$, it suffices to show that there exists a definable set $P'\subseteq P$ that is cofinal in $P$ (with respect to $\lleq$), since then $P=\{b \in B : b\lleq c \text{ for some } c\in P'\}$. So we may always pass to a definable subset $B'\subseteq B$ such that $B'\cap P$ is cofinal in $P$, and then prove definability of $P'=B'\cap P$. In particular one such set must always exist in any finite definable partition of $B$. Hence by o-minimal cell decomposition we may assume that $B$ is a cell.

\begin{claim}\label{claim:MS_1}
If there exists a definable subset $C\subseteq B$ with $\dim C < m$ such that $C\cap P$ is cofinal in $P$ then $P$ is definable. 
\end{claim}
\begin{claimproof}
By o-minimal cell decomposition we may assume that $C$ is a cell. 
If $C$ is a singleton then $P$ has a maximum and thus is definable so we may assume that $\dim C >0$. Hence we have that $0<\dim C <m$. Consider the injective projection $\pi_C:C\rightarrow N^{\dim(C)}$. We may apply the inductive case $\al n, \dim(C)\ar$ to the formula $f_{\pi^{-1}_C(b')}(d)<e$ and reach that 
\[
P \cap C = \{ \pi^{-1}_C(b') : \Nn\models f_{\pi^{-1}_C(b')}(d)<e\}
\]
is definable. Since $P\cap C$ is cofinal in $P$ this implies that $P$ is definable. 
\end{claimproof}

By Claim~\ref{claim:MS_1} we may assume that any definable subset of $B$ of dimension less than $m$ satisfies that its intersection with $P$ is not cofinal in $P$. In particular $\dim B=m$. Onwards let $\pi:N^m\rightarrow N$ denote the projection to the first coordinate.

We apply Lemma~\ref{lem:j-cuts} to the family of sets $\{t\} \times B_t$, for $t\in \pi(B)$, and derive that there exists a finite definable partition $\{B^{(i)} : i\leq k\}$ of $B$ such that, for every $i\leq k$ and $t \in \pi(B)$, either $\{t\} \times B^{(i)}_t \subseteq P$ or $\{t\} \times B^{(i)}_t \subseteq Q$.  By passing if necessary from $B$ to a subset $B^{(i)}$ such that $B^{(i)} \cap P$ is cofinal in $P$, we assume that $B$ has this property. That is, it holds that $P=\cup_{t \in \pi(P)} \{t\}\times B_t$. (By o-minimal cell decomposition and Claim~\ref{claim:MS_1} we may maintain the assumption that $B$ is an open cell.) Note that $\{\pi(P), \pi(Q)\}$ forms a partition of $\pi(B)$. To prove that $P$ is definable it suffices to show that $\pi(P)$ (equivalently $\pi(Q)$) is definable. 

Now let $\sqsubseteq$ denote the linear preorder on $\pi(B)$ given by $t \sqsubseteq s$ if and only if, for every $x \in B_t$, there is some $y\in B_s$ such that $\al t, x \ar \lleq \al s, y\ar$. Since $\lleq$ is definable then $\sqsubseteq$ is clearly definable too. Observe that, for any $t \in \pi(P)$ and $s\in \pi(Q)$, it must hold that $t \sqsubset s$. In other words, $(\pi(P), \pi(Q))$ is a cut in $(\pi(B),\sqsubseteq)$. Since $P\neq \emptyset$ we have $\pi(P)\neq \emptyset$. Onwards we say that a subset of $\pi(P)$ is cofinal in $\pi(P)$ if it is cofinal with respect to $\sqsubseteq$. Like we previously argued with $P$, in order to prove that $\pi(P)$ is definable it suffices to find a definable subset of $\pi(P)$ that is cofinal in $\pi(P)$.

We now introduce a number of sets definable in $\Nn$. Let $B^{\Nn}$ be the set defined in $\Nn$ by the same formula that defines $B$ in $\Mm$. More generally, we use the superscript $\Nn$ to denote an $M$-definable set (including an interval) which is being understood in $\Nn$. Let $B^*= \{ b \in B^{\Nn} : \Nn \models \exists t \varphi(d,t,b)\}$. Finally, let $P^*=\{ b \in B^* : f_b(d)<e\}$. Note that $B \subseteq B^* \subseteq B^{\Nn}$ and $P\subseteq P^*$ with $P^*\cap M^m = P$. 

We consider a partition of $\piBN$ into three $N$-definable subsets as follows. (Note that $\piBN = \pi(B^{\Nn})$.)

\begin{enumerate}[(i)]
    \item $I^*(0) = \{t \in \piBN : P^*_t = \emptyset\}$.
    \item $I^*(1) = \{t \in \piBN : P^*_t = \BNx \}$.
    \item $I^*(2) = \{t \in \piBN : \emptyset \neq P^*_t \subsetneq \BNx\}$.
\end{enumerate}

We fix $I^*=I^*(i)$, for some $0\leq i \leq 2$, satisfying that $I^*\cap \pi(P)$ is cofinal in $\pi(P)$. If $I^*\cap \pi(P)$ is definable (in $\Mm$) then $\pi(P)$, and thus $P$, are definable. 

Observe that, since $P \subseteq P^*$, it must hold that $I^*(0) \cap \pi(P) = \emptyset$, and so it cannot be that $I^* = I^*(0)$. Suppose that $I^*=I^*(1)$. By definition of $I^*(1)$ in this case observe that $I^* \cap \pi(P) = I^* \cap M$. Applying the case $m=1$ of the proof, we derive that $I^*\cap \pi(P)$ is definable. 

Hence we are left with the case $I^*=I^*(2)$. Onwards we assume that $P$ (equivalently $\pi(P)$) is not definable and reach a contradiction, by showing that there exists a definable subset $C$ of $B$ of dimension less than $m$ such that $C\cap P$ is cofinal in $P$, and then applying Claim~\ref{claim:MS_1}.

We require the following preliminary claim, showing that we may assume that $I^*$ is $M$-definable. Onwards let $I=I^* \cap M$. Recall that, by the case $m=1$ of the proof, $I$ is definable. 

\begin{claim}\label{claim:I^*}
There exists $t_1, t_2 \in I$, with $(t_1,t_2)^{\Nn} \subseteq I^*$, such that $(t_1,t_2) \cap \pi(P)$ is cofinal in $\pi(P)$. 
\end{claim}
\begin{claimproof}
By o-minimality $I^*$ is a finite union of points and intervals (in $N$). Hence, by making $I^*$ smaller if necessary, we may assume that it is an open interval (if a singleton is cofinal in $\pi(P)$ then we immediately have that $\pi(P)$ is definable). Similarly, by applying Lemma~\ref{lem:preorders}, we may assume that $\sqsubseteq$ restricted to $I$ is either $\leq$, the reverse order of $\leq$, or the trivial preorder where any two points are equivalent. 

Now if $I^*\cap \pi(Q) = \emptyset$ then we have that $I^* \cap \pi(P)= I^*\cap \pi(B)= I^* \cap M = I$, which is definable. Hence it must hold that $I^*\cap \pi(Q) \neq \emptyset$. Since we also have $I^* \cap \pi(P) \neq \emptyset$ we derive that $\sqsubseteq$ restricted to $I$ cannot be the trivial preorder.

Suppose that $\sqsubseteq$ on $I$ coincides with $\leq$. Pick $t_1 \in \pi(P)$ and $t_2 \in \pi(Q)$. By definition of $\sqsubseteq$ we have $t_1 \sqsubset t_2$ and so $t_1 < t_2$. Furthermore since $\pi(P)$ is not definable $t_1$ cannot be a maximum in $\pi(P)$, and so $(t_1, t_2) \cap \pi(P)$ must be cofinal in $\pi(P)$. Since $I^*$ is an interval we clearly also have that $(t_1,t_2)^{\Nn} \subseteq I^*$.
The case where $\sqsubseteq$ on $I$ equals the reverse of the order $\leq$ is analogous by picking $t_1 \in \pi(Q)$ and $t_2 \in \pi(P)$. 
\end{claimproof}

By Claim~\ref{claim:I^*} after passing to a subset if necessary we assume onwards that $I^*$ is an $M$-definable open interval in $N$. (In particular $I^*=I^{\Nn}$.)  

Since $B$ is an open cell then, for every $t\in \piBN$, the fiber $\BNx$ is an open cell in $N^{m-1}$ (in particular definably connected). Consequently, for every $t\in I^*$, by definition of $I^*(2)$ it must be that 
\[
\text{bd}(P^*_t) \cap \BNx \neq \emptyset, 
\]
where $\text{bd}(P^*_t)$ denotes the topological boundary of $P^*_t$ in $N^{m-1}$. 

Consider the set, definable in $\Nn$, given by 
\[
H^*=\bigcup_{t \in I^*} \{t\} \times \text{bd}(P^*_t).
\]
By o-minimality for each $t \in I^*$ it holds that $\dim \text{bd}(P^*_t) < m-1$, and so $H^*$ has dimension less than $m$ (see Chapter 4, Proposition 1.5 and Corollary 1.6, in \cite{dries98}). Moreover, by the above paragraph, for every $t\in I^*$ it holds that $H_t^* \cap \BNx \neq \emptyset$. Hence we have reached that there exists a definable set $H^* \subseteq N^m$ in $\Nn$ of dimension less than $m$ satisfying that $H_t^*\cap \BNx \neq \emptyset$ for every $t\in I^*$. Since $\Mm \lleq \Nn$ and $I^* = I^{\Nn}$ is $M$-definable, then there must also exists a definable (in $\Mm$) set $C \subseteq M^m$ of dimension less that $m$ satisfying that $C_t \cap B_t \neq \emptyset$ for all $t \in I$. By considering $C\cap B$ in place of $C$ we may assume that $C$ is a subset of $B$. We must now show that $C\cap P$ is cofinal in $P$. We use the following claim. 

\begin{claim}\label{claim:slice-works}
For each $t\in \pi(P)$ there exists $s \in I \cap \pi(P)$ such that $\{t\} \times B_t \prec \{s\} \times B_s$.  
\end{claim}
\begin{claimproof}
Fix $t\in \pi(P)$ and suppose that an $s$ as described does not exist. Recall that, by assumptions on $B$, every $s\in \pi(B)$ satisfies that either $B_s \subseteq P_s$ or $B_s \subseteq Q_s$. In particular every $s\in \pi(Q)$ must satisfy that $\{t\} \times B_t \prec \{s\} \times B_{s}$. Hence we derive that $I \cap \pi(Q) = \{ s \in I : \{t\} \times B_t \prec \{s\} \times B_{s}\}$, meaning that $I \cap \pi(Q)$ is definable, or equivalently $I\cap \pi(P)$ is definable, and consequently $\pi(P)$ is definable. Contradiction. 
\end{claimproof}

Now fix $b \in P$. By Claim~\ref{claim:slice-works} let $s\in I \cap \pi(P)$ be such that $b \prec \{s\} \times B_s$. Recall that $B_s=P_s$. Since, by definition of $C$, it holds that $C_s \cap B_s \neq \emptyset$, we derive that there is $c \in C \cap P$ with $b \lleq c$. It follows that $C\cap P$ is cofinal in $P$. Since $\dim C < m$ we reach a contradiction by Claim~\ref{claim:MS_1}.
\end{proof} 

Van den Dries~\cite[Proposition 8.1]{dries03} proved a strengthening of the Marker-Steinhorn Theorem for o-minimal expansions of ordered fields. We end the paper by asking whether or not this result holds without the field assumption.  

\begin{question}\label{question:dries}
Let $\Mm=(M,\ldots)$ be an o-minimal structure and $\Nn$ be a tame elementary extension. Consider the pair $(\Nn,M)$, i.e. the expansion of $\Nn$ by a unary predicate for $M$.
Is every subset of $M^n$ definable in $(\Nn,M)$ also definable in $\Mm$? 
\end{question}

\paragraph{Acknowledgements}
The author thanks Anand Pillay, Dave Marker and Charles Steinhorn, for crucial conversations on their proofs of the Marker-Steinhorn Theorem, which enabled the confirmation of the existing gap.

Margaret E. M. Thomas, Matthias Aschenbrenner, and the anonymous referee provided helpful comments on the contents and presentation of this paper. In particular the latter pointed out that the case $m=1$ in the proof of Theorem~\ref{thm:marker_steinhorn} could be derived from an elementary argument. The author also thanks Kobi Peterzil and Marcus Tressl, for directing him towards some relevant literature.  

\paragraph{Funding}
During the writing of this paper the author was supported by the Graduate School and the Department of Mathematics at Purdue University, and by the UK Engineering and Physical Sciences Research Council (EPSRC) Grant EP/V003291/1.






\bibliography{mybib_types_transversals_compactness}
\bibliographystyle{alpha}
\end{document}